\documentclass{amsart}
\usepackage{amsmath,amscd,amssymb, color}
\usepackage{mathtools, xy, hyperref}

\setlength{\oddsidemargin}{0cm} \setlength{\evensidemargin}{0cm}\setlength{\textwidth}{16cm} % horizontal
\setlength{\topmargin}{-1cm}\setlength{\textheight}{24cm} \setlength{\headheight}{.1in}\setlength{\headsep}{.3in}\setlength{\parskip}{.5mm} % vertical

\theoremstyle{plain}
\newtheorem{theorem}{Theorem}[section]

\newtheorem{prop}[theorem]{Proposition}

\theoremstyle{definition}
\newtheorem{remark}[theorem]{Remark}

\newcommand{\OO}{{\mathcal O}}

\newcommand{\A}{{\mathbb A}}

\newcommand{\V}{{\mathbb V}}

\newcommand{\Z}{\mathbb{Z}}

\newcommand{\C}{\mathbb{C}}
\newcommand{\SC}{\mathcal{C}}

\renewcommand{\P}{\mathbb{P}}

\newcommand{\GL}{\mathrm{GL}}

\renewcommand{\phi}{\varphi}

\DeclareMathOperator{\Hilb}{Hilb}
\DeclareMathOperator{\Sym}{Sym}
\DeclareMathOperator{\Pic}{Pic}

\DeclareMathOperator{\PGL}{PGL}
\DeclareMathOperator{\SL}{SL}
\DeclareMathOperator{\Gr}{Gr}

\DeclareMathOperator{\Spec}{Spec}

\DeclareMathOperator{\Tr}{tr}

\newcommand{\np}{\mathrm{np}}

\xyoption{all}

\begin{document}

\title[{On some Hilbert schemes}]{On the equations defining some Hilbert schemes}

\author{Jonathan D. Hauenstein}
\address{Department of Applied and Computational Mathematics and Statistics, University of Notre Dame, Notre Dame, IN 46556, USA}
\email{hauenstein@nd.edu}
\author{Laurent Manivel}
\address{Institut de Math\'ematiques de Toulouse; UMR 5219, Universit\'e de Toulouse \& CNRS, F-31062 Toulouse Cedex 9, France}
\email{laurent.manivel@math.cnrs.fr}
\author{Bal\'azs Szendr\H oi}
\address{Mathematical Institute, University of Oxford, OX1 2DL Oxford, UK}
\email{szendroi@maths.ox.ac.uk}

\begin{abstract} We work out details of the extrinsic geometry 
for two Hilbert schemes of some contemporary interest: the Hilbert scheme $\Hilb^2 \P^2$ of two points on $\P^2$ and the dense open set parametrizing non-planar clusters in
the punctual Hilbert scheme $\Hilb^4_0(\A^3)$ of clusters of length four on~$\A^3$ with support at the origin. We find explicit equations in projective, respectively affine, embeddings for these spaces. In particular, we answer a question of Bernd Sturmfels
who asked for a description of the latter space that is amenable to further computations. While the explicit equations we find are controlled in a precise way by the representation theory of $\SL_3$, our arguments also rely on computer algebra. 
\keywords{Binomial ideals \and sparse polynomials \and numerical algebraic geometry \and witness sets \and Macaulay dual spaces}
\end{abstract}

\maketitle

\section*{Introduction} 
\noindent The Hilbert scheme $\Hilb^2 \P^2$ of two points on the projective plane has a  projective embedding 
\begin{equation}\Hilb^2 \P^2  \hookrightarrow \P^{14}\label{emb1}\end{equation}
defined as a composition of a natural map into $\Gr(2,6)$  followed by the Pl\"ucker embedding (for details, see Section~\ref{sec:1.1}). 
On the other hand, the Hilbert scheme $\Hilb^4(\A^3)$ of $4$ points on affine $3$-space has a distinguished affine open subset $\Hilb^4(\A^3)_\np$ of non-planar clusters, which has an affine embedding
\[ \Hilb^4(\A^3)_\np \hookrightarrow \A^{15},\]
whose image is the cone over the same Grassmannian (see Section~\ref{sec:2.1}). This latter Hilbert scheme has a natural closed subset, the space  $\Hilb^4_0(\A^3)_\np$ of non-planar, punctual clusters, where punctual means that the scheme-theoretic support of the subscheme parametrized is at a single point (chosen to be the origin; see Section~\ref{sec:2.2} for details). We thus obtain an affine embedding
\begin{equation}\label{emb2}
\Hilb^4_0(\A^3)_\np \hookrightarrow \A^{15}.
\end{equation}
The connection between these Hilbert schemes goes back to Tikhomirov's \cite[Thm 3]{Tik}: the affine cone over $\Hilb^2 \P^2\subset\P^{14}$ is the singular locus of $\Hilb^4_0(\A^3)_\np$. 

Our aim in this paper is twofold: we give explicit equations to describe the images of the embeddings~\eqref{emb1} and~\eqref{emb2}, and we recover this relationship between the spaces. We begin in Section~\ref{sec:1.2} by identifying various spaces of interest as $\PGL_3$-orbit closures in $\Gr(2,6)$. Their defining polynomials arise from the representation theory of $\SL_3$ (see Section~\ref{sec:1.3}). We use computer algebra to derive specific polynomials, as well as to check various properties of the resulting systems of equations, in particular that they define reduced ideals. We also provide a more synthetic way to derive the equations in Section~\ref{sec:1.4}. 

Our main result is Theorem~\ref{main_thm} in Section~\ref{sec:2.2}. Section~\ref{sec:2.3} presents an explicit calculation to illuminate, and yet again reprove, some of the main results. The explicit polynomials defining the embeddings~\eqref{emb1} and~\eqref{emb2} are listed in the Appendix.

\section{The Hilbert scheme of two points on the projective plane}\label{sec:1}

\subsection{Basics on the Hilbert scheme of two points on the projective plane}\label{sec:1.1}

Let $U$ be a $3$-dimensional vector space. In this section, we recall some well-known facts about the Hilbert scheme $\Hilb^2 \P(U)$ of two points on the projective plane $\P(U)$. We refer to~\cite[Sections 2 and 3]{ABCH} for more details and further references.

Let $h\colon\Hilb^2 \P(U) \to \Sym^2 \P(U)$ be the Hilbert--Chow morphism, $H=h^*({\mathcal O}(1))$
the pullback of the natural generator of $\Pic(\Sym^2 \P(U))$. Let $\Delta\subset \Sym^2 \P(U)$ be the diagonal and $B=h^{-1}(\Delta)$ the exceptional locus of $h$, a threefold ruled over $\Delta\cong\P(U)$. It is known that 
\[\Pic(\Hilb^2 \P(U))\cong \Z[H]\oplus \Z\left[\frac{B}{2}\right].
\] 
Let $\mathcal{L}$ denote the line bundle 
$\OO(2)$ on $\P(U)$. There is an associated rank two bundle $\mathcal{L}^{[2]}$ on $\Hilb^2\P(U)$ whose fiber over a length $2$ subscheme $\zeta\subset\P(U)$ is $H^0(\zeta,\mathcal{L})$. Its space of sections is $$H^0(\Hilb^2\P(U),\mathcal{L}^{[2]})\cong H^0(\P(U),\mathcal{L})=S^2U^*$$ and it is globally generated. 
Its determinant 
\[D=\det(\mathcal{L}^{[2]}) = 2[H]-\left[\frac{B}{2}\right]\in\Pic(\Hilb^2\P(U))\] 
is very ample and yields
a $\PGL(U)$-equivariant chain of embeddings
\[ \varphi_{D}\colon \Hilb^2\P(U)\hookrightarrow \Gr(4,S^2U^*)=\Gr(2,S^2U) \hookrightarrow \P(\wedge^2S^2 U)\cong\P^{14}.
\] 
The image of $\varphi_{D}$ is of degree 
\[D^4= 16 H^4 - 16 H^3B + 6 H^2B^2 - H B^3 + \frac{B^4}{16}=21\]
which arises from the known intersection numbers~\cite{Ryan} on the Hilbert scheme, namely
\[(H^4,H^3B,H^2B^2,HB^3,B^4) = (3,0,-8,-24,-48).\]
The embedding $\varphi_{D}$ restricts to a $\PGL(U)$-equivariant chain of embeddings 
\[\varphi_{D}|_B\colon B\hookrightarrow   \Gr(2,S^2 U)\hookrightarrow \P(\wedge^2S^2 U) \cong \P^{14}
\] 
of degree 
\[  (D|_B)^3= D^3B = 8 H^3 B - 6 H^2B^2 +  \frac{3 HB^3}{2}- \frac{B^4}{8}=18.
\] 

\subsection{Pencils of conics and orbit structure}\label{sec:1.2}

\label{subsec_pencils}
Consider the space
$\Gr(2,S^2 U)$
parametrizing pencils of conics on $U^*$. The decomposition of this space
into $\PGL(U)$-orbits is classical, see e.g.~\cite{WK}. There are eight types with 
normal forms as in the following table, yielding a decomposition of the Grassmannian $\Gr(2,S^2 U)$ into $\PGL(U)$-orbits $\OO_i$ (and sometimes $\OO'_i$) of dimension~$i$. 
$$\begin{array}{ll}
\OO_3 \qquad & \langle x^2,xy\rangle \\
\OO_4 & \langle x^2,y^2\rangle \\
\OO'_4 & \langle xy,xz\rangle \\
\OO_5 & \langle x^2, y^2+xz\rangle \\
\OO_6 & \langle x^2, yz\rangle \\
\OO'_6 & \langle x^2+yz, xz\rangle \\
\OO_7 & \langle x^2+y^2, xz\rangle \\
\OO_8 & \langle x^2+y^2, x^2+z^2\rangle 
\end{array}$$

\medskip
Let $Y_i, Y_i'$ denote the closure of $\OO_i, \OO_i'$ in $\Gr(2,S^2 U)$.  Clearly, $Y_8= \Gr(2,S^2 U)$. 
The Hasse diagram, where $\OO_i$ is connected to $\OO_j$ if the latter is open in $Y_i\setminus\OO_i$, 
is the following:
$$\xymatrix{
 & \OO_8 \ar@{->}[d]  &  \\  & \OO_7 \ar@{->}[rd] \ar@{->}[ld]  &  \\
 \OO_6 \ar@{->}[dr] & &\OO'_6 \ar@{->}[dl]\ar@{->}[dd]  \\ & \OO_5 \ar@{->}[dl]  &  \\
 \OO_4 \ar@{->}[dr]  & &\OO'_4 \ar@{->}[dl] \\ & \OO_3   &
}$$

\begin{prop} \label{prop_closures}
\begin{enumerate}
    \item $Y_6\subset \Gr(2,S^2 U)$ is birational to a $\P^4$-bundle over $\P^2$.
    \item $Y_5\subset \Gr(2,S^2 U)$ is birational to a quadric bundle over $\P^2$ and is a non-normal projective variety with singular locus equal to $Y_4$.
    \item $Y'_4\simeq \P(U)\times \P(U^*)$ which is embedded in $\P^{14}$ via the linear series $\OO(2,1)$.
    \item The chain of embeddings 
\[ Y_3\subset Y_4 \subset \Gr(2,S^2 U)\subset \P(\wedge^2S^2 U) \cong \P^{14}
\] 
is isomorphic to the chain  
\[ B\subset \Hilb^2\P(U) \subset \Gr(2,S^2 U)\subset \P(\wedge^2S^2 U) \cong \P^{14}\]
discussed in the previous section.
\end{enumerate} 
\end{prop}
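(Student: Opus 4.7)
My strategy for all four parts is to identify each orbit closure $Y_i$ with a concrete geometric parameter space, using the distinguished invariants visible in the normal forms of Section~\ref{subsec_pencils} (a unique double line in the pencil, a common linear factor, etc.) to build explicit $\PGL(U)$-equivariant maps, and then to check that they are birational, isomorphisms, or closed embeddings as claimed.

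For (1) and (2), I set up the $\P^4$-bundle $\pi\colon \P(S^2 U / \mathcal{K}) \to \P(U)$, where $\mathcal{K}$ is the rank-one subbundle of the trivial bundle $S^2 U \otimes \OO_{\P(U)}$ with fibre $\langle \ell^2\rangle$ at $[\ell]$, and consider the natural morphism $([\ell],[q]) \mapsto \langle \ell^2, q\rangle$ into $\Gr(2, S^2 U)$. From the normal form $\langle x^2, yz\rangle$ a short discriminant calculation for $\alpha \ell^2 + \beta q$ shows both that a generic pencil in $\OO_6$ contains a unique double line (so the map is birational onto $Y_6$), and that $\overline{\OO_5}$ is cut out in each fibre by the vanishing of the discriminant of $q|_{\{\ell = 0\}}$ as a binary quadratic form---a single quadric condition, yielding a quadric subbundle mapping birationally onto $Y_5$. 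The non-normality along $Y_4$ arises because a pencil with two double lines $\ell_1^2, \ell_2^2$ has two preimages in this subbundle, one over each double line; I would verify that this produces a genuine non-normal singularity through a local computation on an \'etale slice to the $\PGL(U)$-orbit at $\langle x^2, y^2\rangle$. That $\Sing(Y_5) = Y_4$ then follows by $\PGL(U)$-invariance together with a Jacobian check at a point of $\OO_5$ confirming smoothness there.

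For (3), the normal form $\langle xy, xz\rangle$ exhibits a pencil in $\OO_4'$ as $\ell \cdot H_p$, where $\ell \in U$ is the distinguished common linear factor and $H_p = \{m \in U : m(\tilde p) = 0\}$ is the two-dimensional space of linear forms vanishing at the common base point $p \in \P(U^*)$ of the residual lines. This produces a $\PGL(U)$-equivariant morphism $\P(U) \times \P(U^*) \to Y_4'$ that is bijective on closed points and hence, by comparison of dimensions and smoothness, an isomorphism; tracking degrees in the Pl\"ucker coordinate $(\ell m_1) \wedge (\ell m_2)$ identifies the pulled-back hyperplane bundle as $\OO(2,1)$. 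For (4), I unwind $\varphi_D$ explicitly: under the duality $\Gr(4, S^2 U^*) = \Gr(2, S^2 U)$ together with the canonical pairing $S^2 U \otimes S^2 U^* \to \C$, the evaluation functional at $[\tilde p] \in \P(U)$ corresponds to $\tilde p^2 \in S^2 U$, so a reduced cluster $\{[\tilde p_1],[\tilde p_2]\}$ maps to $\langle \tilde p_1^2, \tilde p_2^2\rangle \in \OO_4$, while a non-reduced cluster at $[\tilde p]$ with tangent direction $[\tilde v]$ maps via the first-order derivative to $\langle \tilde p^2, \tilde p \tilde v\rangle \in \OO_3$. Since $D$ is very ample and $\varphi_D$ is therefore a closed embedding, passing to closures identifies $\Hilb^2 \P(U)$ with $Y_4$ and $B$ with $Y_3$.

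The trickiest step will be the non-normality assertion in (2): verifying that the two-to-one identification over $Y_4$ really produces a non-normal singularity, rather than being resolved away by a choice of parametrization, requires an explicit local model transverse to the $\PGL(U)$-orbit and goes beyond the purely orbit-theoretic reasoning that handles the rest of the proposition.
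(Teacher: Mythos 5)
Your proposal is correct and, for parts (1)--(3), follows essentially the same route as the paper: your $\mathcal{K}\subset S^2U\otimes\OO_{\P(U)}$ is the paper's $\OO(-2)$, so $\P(S^2U/\mathcal{K})$ is their $\P(E)$, and your quadric condition (vanishing of the discriminant of $q$ restricted to the line $\ell=0$) is precisely their rank-three quadratic form $S^2E\to S^2(S^2Q)\to\det(Q)^2$ with $Q=U/\OO(-1)$. Likewise your treatment of $Y_4'$ via the common linear factor $\ell$ and base point $p$ matches theirs.

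The one place you flag as the ``trickiest step'' --- showing that the two-to-one identification over $Y_4$ really yields non-normality --- is actually handled more cleanly in the paper than the \'etale slice computation you anticipate. Since the quadratic form on $E$ has constant rank $3$ (its kernel is the constant-rank kernel of $E\to S^2Q$), the quadric bundle $\widetilde Y_5\to\P(U)$ has fibres of fixed corank over a smooth base and is therefore normal. The morphism $\widetilde Y_5\to Y_5$ is finite, birational, and $2$-to-$1$ over $Y_4$; by Zariski's Main Theorem a finite birational morphism from a normal variety onto a normal variety is an isomorphism, so $Y_5$ cannot be normal along $Y_4$. No local model transverse to the orbit is required, only the verification that $\widetilde Y_5$ is normal. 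The conclusion $\Sing(Y_5)=Y_4$ then follows from $\PGL(U)$-invariance of the singular locus together with generic smoothness, as you say (a Jacobian check is not strictly needed).

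For (4), your argument is in fact slightly more explicit than the paper's. The paper identifies $\OO_4\cong\Sym^2\P(U)-\Delta$ and then concludes ``for dimension reasons'' that $Y_4=\varphi_D(\Hilb^2\P(U))$, leaving implicit the compatibility check that $\varphi_D$ restricted to reduced clusters $\{[\tilde p_1],[\tilde p_2]\}$ lands on the pencil $\langle\tilde p_1^2,\tilde p_2^2\rangle$ (rather than, say, on the other $4$-dimensional orbit closure $Y_4'$). Your computation of $\varphi_D$ via the annihilator pairing, giving $\langle\tilde p^2,\tilde p\tilde v\rangle$ on the non-reduced locus and $\langle\tilde p_1^2,\tilde p_2^2\rangle$ on the reduced locus, supplies exactly this compatibility and is the cleaner way to nail down both identifications $Y_4\cong\Hilb^2\P(U)$ and $Y_3\cong B$ simultaneously.
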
 

\begin{proof} First, note that the variety $Y_6$ parametrizes pencils of conics 
containing a double line. We thus have a birational model 
$\widetilde{Y}_6\to Y_6$
parametrizing pairs $(P,\ell)$ of a pencil $P$ of conics on $U^*$ and a line $\ell\subset \P(U^*)$ such that $\ell^2$ belongs to $P$.  For the bundle $E=S^2U/\OO(-2)$ over $\P(U)$, we see that $\widetilde{Y}_6=\P(E)$, a $\P^4$-bundle over $\P^2$.

Next, $Y_5$ parametrizes pencils generated by a double line and a conic tangent to this line. Consider the birational model $\widetilde{Y}_5\to Y_5$ defined as the preimage of $Y_5$ in $\widetilde{Y}_6$. Let
$Q=U/\OO(-1)$ which is a bundle over $\P(U)$. 
Note that there is a natural morphism $E\rightarrow S^2Q$ and hence a natural rank three quadratic form 
$S^2E\rightarrow S^2(S^2Q)\rightarrow \det(Q)^2$. Then, $\widetilde{Y}_5\subset \widetilde{Y}_6=\P(E)$ is the corresponding quadric bundle over $\P(U)$. The morphism $\tilde{Y}_5\to Y_5$ is birational and is a double cover over $Y_4$. Thus, $Y_5$ is not normal along $Y_4$ and,
since the latter is the complement of the open orbit in $Y_5$, it has to be the singular locus.

The variety $Y'_4$ parametrizes pencils of reducible conics containing a fixed line $\ell$. Such a pencil is of the form $\ell L$ where $L$ is a hyperplane of $U$.  Hence, we have an isomorphism $Y'_4\cong\P(U)\times\P(U^*)$. The restriction of the tautological bundle of $\Gr(2,S^2 U)$ is the 
product of the rank one tautological bundle on $\P(U)$ 
with the rank two tautological bundle on the dual $\P(U^*)$.
Its determinant is $\OO(2,1)$ showing that the degree of $Y'_4$ is $(2h+h')^4=24$. 

In this description, we see that $Y_3\subset Y'_4=\P(U)\times\P(U^*)$ is the incidence quadric, the full flag variety of $U$. Its degree is $(h+h')(2h+h')^3=18$. 

To conclude, consider the variety $Y_4$ parametrizing pencils generated by two double lines and their degeneracies. Its dense open subset
$\OO_4$ is isomorphic to $\Sym^2\P(U)-\Delta$ where $\Delta$ denotes the diagonal. 
For dimension reasons, its closure $Y_4$ must be $\Hilb^2\P(U)\subset\Gr(2,S^2 U)$.
Finally, note that the diagonal in $\Hilb^2\P(U)=Y_4$ 
is $\P (T_{\P(U)})\cong F\cong Y_3$ as claimed which completes the proof.
\end{proof}

\subsection{Explicit equations}\label{sec:1.3}

\label{sec_proj}
Consider the group $\SL(U)\cong \SL_3(\C)$. 
Denote by $S_{a,b}$ the representation of $\SL(U)$ with highest weight $(a-b)\omega_1+b\omega_2$, where $\omega_1$ and $\omega_2$ are the fundamental weights and $a\geq b\geq 0$. 
Clearly, $S_{0,0}= \C$ is the trivial module, $S_{1,0} = U$ is the three-dimensional ``vector'' representation, and $S_{1,1}= U^*$ is its dual. 
Consider the $\SL(U)$-module $W = \wedge^2 S^2 U^*= S_{3,2}$
of dimension~$15$.
Its second symmetric square splits into irreducibles as
\begin{equation} S^2 W \cong S_{6,4}\oplus S_{4,3}\oplus S_{4,0}\oplus S_{3,1}\oplus S_{2,2}
\label{eq_dec} \end{equation}
of dimension $120=60+24+15+15+6$.
This decomposition can be easily checked computationally, e.g., via the {\tt SchurRings} \cite{Schur_M2}
package in {\tt Macaulay2} \cite{M2}.

Consider the $14$-dimensional projective space $\P W^* =\P (\wedge^2 S^2 U)\cong\P^{14}$ with coordinate ring $S^*(W)$. Regard elements of $S^2W$ as quadrics on $\P W^*$.
Using the decomposition~\eqref{eq_dec} above, 
define ideals of the ring $S^*(W)$ generated by spaces of quadrics as follows:
\begin{align}
\label{eqn:eqlabel}
\begin{split}
I_8 & =  \langle S_{3,1}\rangle \lhd S^*(W),\\
I_5 & =  \langle S_{3,1}, S_{2,2}\rangle \lhd S^*(W),\\
I_4 & =  \langle S_{3,1},  S_{2,2}, S_{4,3}\rangle \lhd S^*(W),\\
I_3 & =  \langle S_{3,1},  S_{2,2}, S_{4,3}, S_{4,0}\rangle \lhd S^*(W).
\end{split}
\end{align}
These ideals are generated by $15, 21, 45$ and $60$ quadrics, respectively.
The first ideal $I_8$ consists of quadrics parametrized by $S_{3,1}=\wedge^2 S^2 U$ itself
and it is well known that its vanishing locus is the Grassmannian: \[\V(I_8)=\Gr(2,S^2 U)\subset \P(\wedge^2S^2 U).\] 

\begin{prop} \label{prop_eqns} For $i=3,4,5$ we have, using the notations of the previous section, 
\[\V(I_i)\cong Y_i\subset\Gr(2,S^2 U)\subset \P(\wedge^2S^2 U) \cong \P^{14}.\]
In particular, the ideals of the orbit closures $Y_3,Y_4, Y_5 \subset \Gr(2,S^2 U)$ discussed before are generated by quadrics.
\end{prop}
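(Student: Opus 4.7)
The plan is to combine $\SL(U)$-equivariance with the classical orbit stratification of $\Gr(2,S^2U)$ to pin down each $\V(I_i)$ as a set, and then invoke computer algebra to upgrade set-theoretic equality to ideal-theoretic equality. The key simplification is that any $\SL(U)$-invariant closed subscheme of $\P(\wedge^2 S^2 U)$ has a defining ideal which is itself $\SL(U)$-invariant, so in each graded piece it breaks into a direct sum of the irreducible summands appearing in~\eqref{eq_dec}. Whether a given irreducible summand $V\subset S^2W$ lies in the ideal of $Y_i$ can be tested by a single evaluation: $V$ vanishes on $Y_i$ if and only if every element of $V$ vanishes at one point $p$ of the dense orbit $\OO_i$, since vanishing at $p$ forces vanishing at $g\cdot p$ for every $g\in\SL(U)$ by equivariance, and $\SL(U)\cdot p$ is dense in $Y_i$.

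For the forward inclusion $Y_i\subseteq\V(I_i)$, I would take the normal form of $\OO_i$ from Section~\ref{subsec_pencils}, convert it to a point of $\P(\wedge^2 S^2 U)$ via its Pl\"ucker coordinates, and verify that every quadric in each of the added isotypic summands vanishes at this point. The added summands are $S_{2,2}$ for $i=5$; $S_{2,2}$ together with $S_{4,3}$ for $i=4$; and $S_{2,2}$, $S_{4,3}$, $S_{4,0}$ for $i=3$. By the equivariance argument above, vanishing at the normal form propagates to vanishing on all of $Y_i$.

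For the reverse inclusion $\V(I_i)\subseteq Y_i$, I would use that $\V(I_i)\cap\Gr(2,S^2 U)$ is closed and $\SL(U)$-invariant, hence a finite union of the orbit closures listed in Section~\ref{subsec_pencils}. It therefore suffices to exclude each orbit $\OO_j$ (or $\OO'_j$) not contained in $Y_i$. For each such forbidden orbit, I only need to exhibit a single quadric in the relevant added isotypic summand that does not vanish at its normal form. The Hasse diagram makes the list of forbidden orbits short: for $i=5$ one must eliminate $\OO_6$, $\OO'_6$, $\OO_7$, $\OO_8$, and $\OO'_4$; for $i=4$ one additionally eliminates $\OO_5$; for $i=3$ one further eliminates $\OO_4$ and $\OO'_4$.

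The main obstacle will be obtaining explicit bases for the isotypic summands $S_{2,2}$, $S_{4,3}$, and $S_{4,0}$ inside $S^2 W$ in Pl\"ucker coordinates, which the Schur decomposition gives only abstractly. I would extract these using the {\tt SchurRings} package in {\tt Macaulay2}, after which the forward and reverse vanishing checks become finite rational computations at the handful of normal forms. To deduce the ``in particular'' statement that $I(Y_i)$ itself is generated by quadrics, I would further verify in {\tt Macaulay2} that each $I_i$ is a radical ideal, so that $I_i=\sqrt{I_i}=I(\V(I_i))=I(Y_i)$.
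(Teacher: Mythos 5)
Your proposal is correct but follows a genuinely different route from the paper's main proof of this proposition. The paper proceeds indirectly: after obtaining explicit generators of the isotypic summands via {\tt SLA}/{\tt GAP}, it computes the dimension, degree, and Hilbert series of each $\V(I_i)$ in {\tt Macaulay2}, then certifies that each scheme is irreducible and reduced (in fact arithmetically Cohen--Macaulay) using numerical algebraic geometry — certified witness point sets, monodromy loops, and certified Hilbert function computations. The identification $\V(I_i)=Y_i$ is then almost automatic: $\V(I_5)$ is an $\SL_3$-invariant, irreducible, reduced $5$-fold in $\Gr(2,S^2U)$, and $Y_5$ is the unique $5$-dimensional orbit closure; the nesting $I_5\subset I_4\subset I_3$ then forces $\V(I_4)=Y_4$ and $\V(I_3)=Y_3$. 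Your approach instead establishes both inclusions directly by evaluating on orbit normal forms, exploiting the fact that an $\SL(U)$-invariant subspace of $S^2W$ vanishes on an orbit iff it vanishes at one representative. This is closer in spirit to the paper's Section~1.4, where the authors test their synthetic equations $\Psi$, $P$, $Q$ on the normal forms $\langle x^2,y^2+xz\rangle$, $\langle x^2,yz\rangle$, $\langle xy,xz\rangle$ in exactly the way you describe. Both routes need the same nontrivial ingredient — explicit bases of the isotypic pieces in Pl\"ucker coordinates — which {\tt SchurRings} alone does not deliver (it gives only the abstract decomposition); the paper obtains these from {\tt SLA}. Your plan to settle the ``in particular'' clause by directly verifying radicality of $I_i$ in {\tt Macaulay2} is a legitimate replacement for the paper's certified-numerics argument that each scheme is reduced and aCM, though it may be substantially more expensive for ideals generated by $45$--$60$ quadrics in $15$ variables. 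Two small cleanup remarks: having established $\V(I_5)=Y_5$, for $i=4$ you only need to rule out $\OO_5$, and for $i=3$ only $\OO_4$; the mention of $\OO'_4$ at the $i=3$ stage is redundant since $\OO'_4\not\subset Y_4$. Also, the student's approach avoids any need to check irreducibility of $\V(I_i)$ separately, since $Y_i$ is an orbit closure; that is a genuine economy over the paper's proof.
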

\begin{proof} Our proof uses computational methods, 
with some details omitted.

Fix a basis $\{v_i\}$ of $U^*$.
The key to finding explicit equations is to derive an explicit form of the decomposition~\eqref{eq_dec} yielding 
basis elements for each of the modules on the right hand side in terms of the obvious basis of the left hand side $S^2 W \cong S^2\left(\wedge^2 S^2 U^*\right)$ 
consisting of symmetric pairs of elements of the form $[(v_i\otimes v_j)\wedge (v_k\otimes v_l)]$. 
This can be done using {\tt SLA}~\cite{deGraaf} in {\tt GAP}~\cite{GAP}.
We obtain explicit generators of these four ideals, which are listed in the Appendix. As expected, the $15$-dimensional space $S_{3,1}$ has a basis whose elements can readily be identified with the quadratic Pl\"ucker relations defining $\V(I_8)=\Gr(2,6)\subset\P^{14}$. 

Given the explicit polynomial generators, the dimension and degree
of the remaining ideals can be computed using {\tt Macaulay2}~\cite{M2}, namely:
$$\begin{array}{c|c|c|c}
\hbox{ideal} & \hbox{dimension} & \hbox{degree} 
& \hbox{Hilbert series} \\
\hline 
I_5 & 5 & 56 & (1+9t+24t^2+19t^3+3t^4)/(1-t)^6\\
I_4 & 4 & 21 & (1+10t+10t^2)/(1-t)^5\\
I_3 & 3 & 18 & (1+11t+6t^2)/(1-t)^4
\end{array}$$ 
We next claim that each of these ideals defines an irreducible and reduced subscheme in $\P^{14}$. 

We first verify that each top-dimensional component in each scheme
is irreducible of multiplicity~$1$.  To that end, 
certifiable witness point sets can be computed 
in numerical algebraic geometry using {\tt Bertini}~\cite{Bertini}
and {\tt alphaCertified}~\cite{alphaCertified}
via well-constrained subsystems~\cite{CertificationSquare}.
Each witness point set consists of 
degree-many points along a complimentary dimensional
linear space, which are nonsingular with respect
to the resulting system.
Certifiable monodromy loops
following~\cite{CertificationMonodromy}
yield that all of the witness points 
for each system are smoothly
connected, showing that 
each of the top-dimensional components is irreducible of multiplicity $1$
with respect to the corresponding ideal.

In each case, showing that 
the scheme is irreducible and reduced is now equivalent to showing that the top-dimensional irreducible component has the same Hilbert series as the entire scheme; this precludes the existence of embedded components.
This was verified using \cite{aCM} 
with certified numerical Hilbert function computations~\cite{AltProblem}.
In fact, this also showed that each scheme
is arithmetically Cohen-Macaulay via \cite{aCM}.

The ideals $I_5\subset I_4 \subset I_3$ thus define $\SL(3)$-invariant 
irreducible reduced subschemes of $\V(I_8)=\Gr(2,6)\subset\P^{14}$ of dimensions
$5, 4, 3$ respectively. As there is only one five-dimensional orbit closure in $\Gr(2,6)$ by our results in the previous section, we must have
$\V(I_5)=Y_5\subset\P^{14}$, and then necessarily 
$\V(I_4)=Y_4\supset \V(I_3)=Y_3$ as claimed. 
\end{proof}

\begin{remark} The $45$ quadrics defining $Y_4\cong\Hilb^2\P^2 \subset \Gr(2,6)$ were also determined, using a different method, in~\cite[Sect. 2]{St}. That paper also studied the corresponding tropicalization. 
\end{remark}

    \begin{remark} In the language of the previous section, the quadrics in $S_{3,1}$ and $S_{4,0}$ 
    define a four-dimensional variety of degree $24$ inside
    of $\Gr(2,S^2 U)$ which is $Y_4'\cong \P^2\times\P^2$ embedded by ${\mathcal O}(2,1)$~as~above. \end{remark}

\subsection{A different way to derive equations for orbit closures}\label{sec:1.4}

We explain here an alternative, synthetic way to re-derive the quadratic equations of the orbit closures obtained above by a computer-based calculation. In this section, we will use the language of $\GL(U)$-modules to respect the full symmetry of the problem. By a slight abuse of notation, we will use $\psi(-)$ as shorthand for the quadratic form associated to a symmetric bilinear form $\psi(-,-)$.

\smallskip
\noindent {\bf The equations of $Y_5$.} The equations of $Y_5\subset \P^{14}$ are the Pl\"ucker equations together with another irreducible module of quadratic equations
that is $\GL(U)$-isomorphic to 
$S^2 U^*\otimes \det (U^*)^2$. Those equations have a simple description in terms of the discriminant of ternary quadratic forms (or symmetric tensors), which is a $\GL(U)$-equivariant map 
$$\delta : S^3(S^2 U)\rightarrow \det (U)^2.$$
Polarizing yields a morphism \[\Delta : S^2(S^2 U)\rightarrow S^2U^*\otimes\det (U)^2.\]
Using the same trick for $U^*$ and twisting by $\det (U)$ appropriately yields another $\GL(U)$-equivariant map
$$\Delta^* : S^2(S^2 U^*\otimes\det (U)^2)\rightarrow S^2U\otimes\det (U)^2.$$

In order to describe the equations of $Y_5\subset\Gr(2,S^2 U)$, since it is the unique irreducible $\GL(U)$-component of $S^2(\wedge^2 S^2 U^*)$ isomorphic to  $S^2 U^*\otimes \det (U^*)^2$, it suffices to exhibit 
a nonzero morphism 
\[\Psi\colon S^2(\wedge^2 S^2 U)\to S^2 U\otimes \det (U)^2\] 
from the dual of the former to the dual of the latter. For $q_1, q_2, q_3, q_4$ belonging to $S^2U$, 
we claim that the following formula defines 
such a morphism:
$$\Psi(q_1\wedge q_2, q_3\wedge q_4) = \Delta^*(\Delta(q_1,q_3), \Delta(q_2,q_4))-
 \Delta^*(\Delta(q_1,q_4), \Delta(q_2,q_3)).$$
Since this is nonzero (see below) and has all the required properties,
it expresses in a compact form 
all the equations of $Y_5$ apart from the Pl\"ucker relations. In the following statement, we use the same notations~$\Psi$ and $\Delta$ for the quadratic forms associated to the symmetric bilinear forms defined by these symbols. 

\begin{prop}
The subvariety $Y_5\subset\Gr(2,S^2 U)$ of the Grassmannian $\Gr(2,S^2 U)$ is defined by
$$\Psi(q_1\wedge q_2) = \Delta^*(\Delta(q_1), \Delta(q_2))-\Delta^*(\Delta(q_1,q_2))=0$$
for $q_1, q_2\in S^2U$.
\end{prop}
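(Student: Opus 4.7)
My plan is to argue purely representation-theoretically, reducing the proposition to the verification that $\Psi$ is a nonzero $\GL(U)$-equivariant map; once that is done, the identification of equations follows automatically from Proposition~\ref{prop_eqns}.

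First I would check that $\Psi$ is well-defined as a $\GL(U)$-equivariant linear map $\Psi \colon S^2(\wedge^2 S^2 U) \to S^2 U \otimes \det(U)^2$. The expression $\Psi(q_1 \wedge q_2, q_3 \wedge q_4)$ is manifestly skew under $q_1 \leftrightarrow q_2$ and under $q_3 \leftrightarrow q_4$, while symmetry under the exchange $(q_1, q_2) \leftrightarrow (q_3, q_4)$ follows from the symmetry of $\Delta$ and $\Delta^*$ in their two arguments. Equivariance is immediate from the construction of $\Delta$ and $\Delta^*$.

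The key observation is that, as a $\GL(U)$-module, the target $S^2 U \otimes \det(U)^2$ is dual to $S^2 U^* \otimes \det(U^*)^2 \simeq S_{2,2}$, and by~\eqref{eq_dec} this isotypic component appears in $S^2(\wedge^2 S^2 U^*)$ with multiplicity one; dually, $S^2 U \otimes \det(U)^2$ appears in $S^2(\wedge^2 S^2 U)$ with multiplicity one. Schur's lemma then forces any nonzero equivariant map with this source and target to be unique up to scalar and surjective, so the quadrics $\phi \circ \Psi \in S^2(\wedge^2 S^2 U^*)$, for $\phi$ ranging over the dual of the target, span precisely the unique $S_{2,2}$ isotypic component of $S^2(\wedge^2 S^2 U^*)$. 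By Proposition~\ref{prop_eqns}, this component together with the Pl\"ucker quadrics $S_{3,1}$ is what cuts out $Y_5$ inside $\P(\wedge^2 S^2 U)$.

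It therefore remains only to verify $\Psi \not\equiv 0$. I would evaluate on one convenient pencil, for instance $q_1 = x^2+y^2$ and $q_2 = x^2+z^2$, a normal form for the open orbit $\OO_8$. Identifying $\Delta(q)$ with the adjugate of the $3 \times 3$ symmetric matrix of $q$, and $\Delta(q_1, q_2)$ with its bilinear polarization, the three ingredients reduce to small diagonal matrices and a short arithmetic check shows $\Psi(q_1 \wedge q_2)$ to be proportional to the nonzero matrix $\mathrm{diag}(1, -1, -1)$. The main obstacle I anticipate is fixing the conventions for the $\det(U)^{\pm 2}$ twists and combinatorial factors in the polarizations $\Delta$ and $\Delta^*$, since a miscount could lead to a spurious cancellation in $\Psi$; once conventions are pinned down, the rest of the argument is a formal consequence of Schur's lemma together with Proposition~\ref{prop_eqns}.
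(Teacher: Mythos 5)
Correct, and essentially the paper's argument: both reduce the claim to the multiplicity-one occurrence of $S_{2,2}\cong S^2U^*\otimes\det(U^*)^2$ in $S^2(\wedge^2 S^2 U^*)$ from~\eqref{eq_dec}, so that by Schur's lemma any nonzero equivariant $\Psi$ automatically produces the module of non-Pl\"ucker quadrics in $I_5$, combine this with Proposition~\ref{prop_eqns}, and finish by checking $\Psi\not\equiv 0$ on an explicit pencil. The only superficial difference is the test point: you evaluate on a representative of the open orbit $\OO_8$ (and your adjugate computation indeed gives $\Psi(q_1\wedge q_2)\propto\mathrm{diag}(1,-1,-1)\neq 0$), whereas the paper evaluates on representatives of $\OO_5$, $\OO_6$ and $\OO'_4$, which adds a redundant but reassuring verification that $\Psi$ vanishes on $Y_5$ and on no larger orbit closure.
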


\medskip
As a sanity check, let us evaluate $\Psi$ on the representatives of the $\PGL_3$-orbits of
$G(2,S^2U)$. For this we normalize $\Delta$ by letting $\Delta(u^2,v^2)=(u\wedge v)^2$. 
\begin{itemize} 
\item For $\mathcal{O}_5$,
we let $q_1=x^2$ and $q_2=y^2+xz$.
Hence, $\Delta(q_1)=0$ and $\Delta(q_1,q_2)=(x\wedge y)^2$
has rank one, so that $\Psi(q_1\wedge q_2) =-\Delta^*(\Delta(q_1,q_2))=0$. 
\item For $\mathcal{O}_6$, we let $q_1=x^2$ and $q_2=yz$. 
Hence, $\Delta(q_1)=0$ and writing $4q_2=(y+z)^2-(y-z)^2$,
we get $\Delta(q_1,q_2)=(x\wedge y)(x\wedge z)$. Since this has rank two, $\Psi(q_1\wedge q_2) =-\Delta^*(\Delta(q_1,q_2))\ne 0$. 
\item For $\mathcal{O}'_4$, we let $q_1=xy$, $q_2=xz$,
$u=x\wedge y$, and $v=x\wedge z$. Then, \mbox{$2\Delta(q_1)=-u^2$}, 
\mbox{$2\Delta(q_2)=-v^2$}, and $2\Delta(q_1,q_2)=-uv$. 
Therefore, $\Delta^*(\Delta(q_1), \Delta(q_2))=\Delta^*(u^2, v^2)/4=(u\wedge v)^2/4$ together with $\Delta^*(\Delta(q_1,q_2))=\Delta^*(uv)/4=-(u\wedge v)^2/8$
yields $\Psi(q_1\wedge q_2)=3(v\wedge w)^2/8\ne 0$.
\end{itemize}

\bigskip\noindent {\bf The other quadratic equations.}
Consider the remaining modules of equations defining $Y_4, Y_3\subset\P^{14}$. These irreducible modules are generated by highest weight vectors. We can write down explicit quadratic polynomials on the Grassmannian in terms of those highest weight vectors as follows. 

The $15$-dimensional module is the irreducible $\GL(U)$-module $S^4(\wedge^2U^*)$, whose highest weights vectors are the tensors of the form $(e\wedge f)^4$ for $e,f\in U^*$. 
We want to associate to such a vector
a quadratic polynomial $P$ on the cone of tensors of the form $q_1\wedge q_2$ for $q_1,q_2\in S^2U$. 
In other words, we need to find a polynomial in $e,f,q_1,q_2$, which has degree four in $e$ and $f$, degree two in $q_1$ and $q_2$, 
invariant under the action of~$\GL(U)$, 
and compatible with the skew-symmetry conditions. 
By the Fundamental Theorems of Invariant Theory, 
such an invariant 
polynomial has to be expressed 
in terms of contractions of $q_1$ and $q_2$ by $e$ and~$f$. 
A straightforward computation shows that up to scalar, there is only one possibility, namely
$$P(q_1\wedge q_2)= \Big( q_1(e)q_2(f)-q_2(e)q_1(f)\Big) ^2 +4\Big( q_1(e,f)q_2(f)-q_2(e,f)q_1(f)\Big)
\Big( q_1(e,f)q_2(e)-q_2(e,f)q_1(e)\Big).$$
One can easily test 
this polynomial on representatives of the orbits in $\Gr(2,S^2U)$ and check that it vanishes identically only on the orbit $\mathcal{O}_3=Y_3$.

The second, $24$-dimensional module is the $\GL(U)$-submodule of $S^3U^*\otimes \wedge^2 U^*\otimes \wedge^3 U^*$
whose highest weights vectors are the tensors of the form $e^3(e\wedge f)(e\wedge f\wedge g)$
for $e,f,g\in U^*$. Again, we need to associate to 
such a vector a quadratic polynomial $Q$ on the cone of tensors of the form $q_1\wedge q_2$ for $q_1,q_2\in S^2U$. 
Thus, we need to find a polynomial in $e$, $f$, $g$, $q_1$, and $q_2$ which has degree five in $e$,
degree two in $f$, degree one in $g$, 
degree two in $q_1$ and $q_2$, 
invariant under the action of $\GL(U)$, 
and compatible with the skew-symmetry conditions. Another 
straightforward computation shows that up to scalar, there is only one possibility, namely
$$\begin{array}{rl}
\hbox{\small $Q(q_1\wedge q_2) =$} & 
\hbox{\small $\Big(q_1(e,g)q_2(e)-q_2(e,g)q_1(e)\Big)\Big(q_1(e)q_2(f)-q_2(e)q_1(f)\Big)$}~+ \\
& \hbox{\small $\Big( q_1(e,f)q_2(e)-q_2(e,f)q_1(e)\Big)\Big( q_1(f,g)q_2(e)-q_2(f,g)q_1(e)+q_1(e,g)q_2(e,f)-q_2(e,g)q_1(e,f)\Big).$}
\end{array}$$
Again, one can easily test this polynomial on 
representatives of the orbits in $\Gr(2,S^2U)$ and check 
that it vanishes identically only on $\mathcal{O}_3$ and $\mathcal{O}_4$. We deduce

\begin{prop}
The subvarieties $Y_3\subset Y_4\subset\Gr(2,S^2 U)$ of the Grassmannian $\Gr(2,S^2 U)$ are defined by the sets of equations
$$\Psi(q_1\wedge q_2) = Q(q_1\wedge q_2)=0$$
and 
$$\Psi(q_1\wedge q_2) = P(q_1\wedge q_2)=Q(q_1\wedge q_2)=0$$
for $q_1, q_2\in S^2U$, respectively.
\end{prop}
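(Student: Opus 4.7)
The plan is to bootstrap from Proposition~\ref{prop_eqns} together with the preceding proposition treating $\Psi$. Proposition~\ref{prop_eqns} asserts that, as a subscheme of $\P(\wedge^2 S^2U)\cong\P^{14}$, the subvariety $Y_4$ (respectively $Y_3$) is cut out by the Pl\"ucker relations $S_{3,1}$ together with the quadrics in the $\SL(U)$-isotypic summands $S_{2,2}\oplus S_{4,3}$ (respectively $S_{2,2}\oplus S_{4,3}\oplus S_{4,0}$) of $S^2 W$, where $W=\wedge^2 S^2 U^*$. Inside $\Gr(2,S^2U)$ the Pl\"ucker relations are automatic, and the preceding proposition has shown that the $\GL(U)$-orbit of $\Psi$ spans the $S_{2,2}$-summand. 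It therefore suffices to prove that the $\GL(U)$-translates of $Q$ sweep out the $S_{4,3}$-summand of $S^2W$ and those of $P$ sweep out the $S_{4,0}$-summand.

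For each of $P$ and $Q$ I would verify three points. First, the polynomial is a genuine element of $S^2 W$: by construction it has bidegree $(2,2)$ in $(q_1,q_2)$, is symmetric under exchange of $q_1$ and $q_2$ (each bracketed factor in the defining formulas being alternating), and vanishes whenever $q_1=q_2$, so it descends to a quadratic form on the cone $\wedge^2 S^2 U$. Second, because the $(e,f)$-dependence of $P$ is polynomially linear in the fixed copy of $S^4(\wedge^2 U^*)\cong S_{4,0}$ generated by $(e\wedge f)^4$, the assignment $(e\wedge f)^4\mapsto P(q_1\wedge q_2)$ extends to a $\GL(U)$-equivariant map $S_{4,0}\to S^2W$; similarly the $(e,f,g)$-dependence of $Q$ defines a $\GL(U)$-equivariant map $S_{4,3}\to S^2W$ from the unique $S_{4,3}$-summand of $S^3U^*\otimes\wedge^2 U^*\otimes\wedge^3 U^*$ containing $e^3(e\wedge f)(e\wedge f\wedge g)$. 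Since the decomposition~\eqref{eq_dec} of $S^2W$ is multiplicity-free, Schur's lemma forces each such equivariant map to land in the corresponding isotypic summand of $S^2W$. Third, the polynomials are nonzero, as is already visible from the orbit-representative evaluations in the excerpt, which exhibit pencils where each takes a nonzero value.

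Combining these three points with the irreducibility of $S_{4,0}$ and $S_{4,3}$ shows that the $\GL(U)$-translates of $P$ and $Q$ span the entire isotypic summands, so the claim follows from Proposition~\ref{prop_eqns}. The main obstacle lies in the second point: one must argue, using the uniqueness of the contraction invariants with the prescribed multidegree (``up to scalar, there is only one possibility'' in the excerpt), that the auxiliary source module for $Q$ is the $S_{4,3}$-summand and not some other constituent of $S^3U^*\otimes\wedge^2 U^*\otimes\wedge^3 U^*$. This is essentially the content of the invariant-theoretic uniqueness sketched in the text, and can be checked directly by a weight-space computation on the Borel-fixed generator $e^3(e\wedge f)(e\wedge f\wedge g)$.
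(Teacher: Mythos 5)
Your proposal is correct and follows essentially the same route as the paper: both rest on Proposition~\ref{prop_eqns} and on placing $P$ and $Q$ in the $S_{4,0}$- and $S_{4,3}$-summands of $S^2W$ by appealing to equivariance and the multiplicity-freeness of~\eqref{eq_dec}. The only difference in emphasis is that the paper's deduction leans on evaluating $P$ and $Q$ on the orbit representatives of Section~\ref{subsec_pencils} to read off the set-theoretic vanishing loci directly, whereas you package the conclusion through Schur's lemma; the orbit evaluations you cite only for nonvanishing are, in the paper, doing that extra work as well.
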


\section{The punctual Hilbert scheme of four points on \texorpdfstring{$\A^3$}{A3}}

\subsection{Non-planar clusters of length four on affine three-space}\label{sec:2.1}

Recall the fixed three-dimensional vector space $U$ from the previous section. In this section, we will think of its dual $T=U^*$ as a copy of affine $3$-space $\A^3$ with ring of functions $\C[T]=\Sym^\bullet U$. 

Let $\Hilb^m(T)\cong \Hilb^m(\A^3)$ denote the Hilbert scheme of~$m$ points on affine three-space. The additive structure on $T$ yields a 
center-of-mass morphism $c\colon \Hilb^m(T)\to T$. 
As is true in all dimensions, $\Hilb^m(T)$ is a nonsingular variety for $m\leq 3$. 
It is also known~\cite{katz} that $\Hilb^4(T)$ is an irreducible and reduced variety of dimension 12, singular along the locus of length-four subschemes of~$T$ given by the squares $[{\mathbf m}_p^2]\in \Hilb^4(T)$ of maximal ideals of points $p\in T$. It has a dense affine open subset $\Hilb^4(T)_\np$ containing all its singularities
defined by the condition that the clusters
parametrized by its points are non-planar,
i.e., not scheme-theoretically contained in a plane. 

Although the following was already proved in~\cite{DSz},
we present a variant of the proof suited to the present narrative.

\begin{theorem}\label{theorem_hilbC4}
There is an $\SL(U)$-equivariant isomorphism \[\Hilb^4(T)_\np \cong T\times {\mathcal C}\Gr(2,S^2U),\]
where ${\mathcal C}\Gr(2,S^2 U)\subset \wedge^2S^2 U$ is the affine cone over the Grassmannian 
\mbox{$\Gr(2,S^2U)\subset  \P(\wedge^2S^2 U) \cong \P^{14}$}.
\end{theorem}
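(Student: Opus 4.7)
The plan is to split off the $T$-factor by exploiting translation-equivariance of the center of mass, then to identify the remaining $9$-dimensional slice with the affine cone using the multiplication map on the length-$4$ coordinate algebra.

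First, the center-of-mass morphism $c\colon \Hilb^4(T)_\np \to T$, defined intrinsically on a cluster $Z$ with $\OO_Z=R$ by $\langle x, c(Z)\rangle = \tfrac14 \Tr(m_x\colon R\to R)$ for $x\in U$, is equivariant for the translation action of $T$ on itself and on the Hilbert scheme. Hence the assignment $Z\mapsto (c(Z),Z-c(Z))$ yields an $\SL(U)$-equivariant isomorphism $\Hilb^4(T)_\np \cong T\times c^{-1}(0)$, reducing us to constructing an $\SL(U)$-equivariant isomorphism $c^{-1}(0)\cong \mathcal{C}\Gr(2,S^2U)$. The forward map arises as follows: for $Z\in c^{-1}(0)$ with ideal $I\subset\C[T]=\Sym^\bullet U$, the non-planar condition $I\cap(\C\oplus U)=0$ together with the equality $\dim(\C\oplus U)=4=\dim\OO_Z$ produces a canonical $\SL(U)$-equivariant isomorphism $\C\oplus U\xrightarrow{\sim}\OO_Z$. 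Composing $S^2U\hookrightarrow \C[T]\twoheadrightarrow \OO_Z$ with its inverse yields a canonical linear map $\pi_Z\colon S^2U\to \C\oplus U$, and the fourth exterior power $\wedge^4\pi_Z$ — interpreted via the $\SL(U)$-equivariant identification $(\wedge^4 S^2U)^*\otimes \det(\C\oplus U)\cong \wedge^2 S^2U$ — defines a canonical element $\Phi(Z)\in \wedge^2 S^2U$. Standard Pl\"ucker theory shows $\Phi(Z)$ equals the Pl\"ucker coordinate of $\ker \pi_Z$ when $\pi_Z$ is surjective (so that the kernel is $2$-dimensional) and vanishes otherwise, so in either case $\Phi(Z)\in \mathcal{C}\Gr(2,S^2U)$.

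The main step is to verify that $\Phi\colon c^{-1}(0)\to \mathcal{C}\Gr(2,S^2U)$ is an isomorphism; both sides are irreducible affine varieties of dimension $9$, so it suffices to produce the inverse. Given a decomposable $\omega=q_1\wedge q_2\neq 0$, set $P=\langle q_1,q_2\rangle\subset S^2U$; this $P$ must serve as the kernel of the multiplication map, the scalar content of $\omega$ fixes the determinant of the induced iso $S^2U/P\xrightarrow{\sim}\C\oplus U$, and the trace-free condition $c=0$ kills the remaining unipotent ambiguity. These data should pin down a unique commutative, associative, unital $\C$-algebra structure on $\C\oplus U$, hence an ideal $I\subset\C[T]$ cutting out a non-planar length-$4$ cluster with center of mass at the origin. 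The case $\omega=0$ corresponds to $[\mathbf{m}_0^2]\in c^{-1}(0)$. Bijectivity on closed points combined with normality of $\mathcal{C}\Gr(2,S^2U)$ — which follows from projective normality of $\Gr(2,6)\subset \P^{14}$ — and Zariski's Main Theorem then yield the isomorphism; $\SL(U)$-equivariance is built into the construction.

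The principal obstacle I expect is the automatic verification that the candidate multiplication produced by the inverse construction really is associative. I would attack this via an $\SL(U)$-representation-theoretic argument: the obstruction to associativity must lie in a specific $\SL(U)$-submodule of a natural tensor space (the traceless component of the multiplication lives in $S^2U^*\otimes U$, whose decomposition $S_{3,2}\oplus S_{1,1}$ is governed by exactly the type of reps appearing in Section~1), and its vanishing can be checked by decomposing into irreducibles and leveraging the equivariance of all the maps involved. This ties the proof back to the representation-theoretic narrative developed in Section~1.
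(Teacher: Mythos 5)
Your centre-of-mass splitting $\Hilb^4(T)_\np \cong T\times c^{-1}(0)$ is correct and is exactly the paper's first step. The trouble begins with the forward map. You define $\Phi(Z)=\wedge^4\pi_Z$ where $\pi_Z\colon S^2U\to\C\oplus U\cong\OO_Z$ is the degree-two multiplication map. This is a degree-four polynomial in the algebra data and, as you correctly observe, vanishes whenever $\pi_Z$ fails to be surjective. But $\pi_Z$ fails to be surjective not only at $[\mathbf m_0^2]$ — it already fails on the entire curvilinear locus. Take, for example, the cluster $Z$ with ideal $I=(y-x^2,\,z-x^3,\,x^4)$, so $\OO_Z\cong\C[t]/(t^4)$ with $x\mapsto t$, $y\mapsto t^2$, $z\mapsto t^3$. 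This is non-planar with centre of mass at the origin, and $\pi_Z$ has rank $2$ (image $\langle t^2,t^3\rangle$, kernel $\langle xz,y^2,yz,z^2\rangle$), so $\Phi(Z)=0=\Phi\bigl([\mathbf m_0^2]\bigr)$. Hence $\Phi$ collapses a positive-dimensional subvariety of $c^{-1}(0)$ to the apex and cannot be an isomorphism, so the whole plan — produce an inverse to $\Phi$, invoke normality and ZMT — cannot get off the ground for this map.

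The paper's map is genuinely different: it is the \emph{linear} projection sending $Z$ to the traceless component $m\in S_{3,2}$ of the multiplication tensor $S^2U\to U$, followed by the unique (up to scale) $\SL(U)$-isomorphism $S_{3,2}\cong\wedge^2S^2U^*$ (made explicit as $\Gamma$ in Section~2.3). Since $a=\Theta(m)$ is forced by associativity, $m$ determines the whole algebra, so this linear map is a closed embedding of $c^{-1}(0)$ into $\wedge^2 S^2U^*\cong\A^{15}$ — injectivity is free. On the cluster above this map gives a nonzero decomposable tensor, confirming it is not the same as your $\wedge^4\pi_Z$. With injectivity in hand the paper never constructs an inverse at all: it identifies the quadratic associativity constraints on $m$ with the Pl\"ucker ideal by matching irreducible $\SL(U)$-summands of $S^2S_{3,2}$ against those of $\Hom(\wedge^2U\otimes U,U)$, concluding that the embedded image is a reduced closed subscheme of $T\times\mathcal{C}\Gr(2,S^2U)$, and then a dimension count ($12=12$) together with irreducibility of both sides forces equality. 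So the representation-theoretic step you anticipate needing to verify associativity is exactly the step the paper carries out, but applied in the forward direction to the \emph{linear} map $m$, not to an attempted inverse. If you want to salvage your approach, you should replace $\wedge^4\pi_Z$ with the linear projection to $S_{3,2}$; then the decomposability of the image is no longer automatic and must be proved by the representation-theoretic comparison, at which point you have reproduced the paper's argument and no inverse construction is needed.
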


\proof If $I_\xi\lhd \C[T]=\Spec\Sym^\bullet U$ 
is the ideal corresponding to a point $\xi\in \Hilb^4(T)_{\np}$, 
then the map \mbox{$\C\oplus U\rightarrow \C[T]/I_\xi$} 
is an isomorphism of vector spaces. 
The resulting algebra structure on the vector space $\C\oplus U$ is encoded by two symmetric bilinear maps:
$$a : U\otimes U\rightarrow \C \qquad \mathrm{and} \qquad m : U\otimes U\rightarrow U.$$
Requiring the product on $\C\oplus U$ to be associative leads to the following equations for any $x,y,z\in U$:
\begin{eqnarray}
\label{c1} & & a(x,m(y,z)) = a(y, m(x,z)), \\
\label{c2} & & m(x,m(y,z)) +a(y,z)x= m(y, m(x,z))+a(x,z)y. \end{eqnarray}

Using the highest weight notation for $\SL_3$-modules introduced in the previous section, first observe that $m$ is a tensor in 
$S^2U^*\otimes U\cong U^*\oplus S_{3,2}$
and the projection to $T=U^*$ is the center of mass map~$c$. Using a translation by the evident action of $U^*$ on the whole setup, it suffices to restrict to the case where the center of mass of the ideal $I_\xi$ is at the origin in $T$ so that $m\in S_{3,2}$. 

Equation (\ref{c2}) shows that $a(y,z)x-a(x,z)y$ is determined by $m$, $x$, $y$, and $z$ 
which implies that $a$ is uniquely determined by $m$ 
and depends quadratically on it. As confirmed by~\cite{Schur_M2}, there exists a unique $\SL(U)$-equivariant map up to scale, namely
$$\Theta :  S^2(S_{3,2}) \rightarrow S^2 U^*.$$
Fixing the correct normalization, equation (\ref{c2}) implies that $a=\Theta(m)$. 
Now, for any  $x,y,z\in U$, we can rewrite 
equations~\eqref{c1}-\eqref{c2} in the following form: 
\begin{eqnarray}
\label{c3} & & \Theta(m)(x,m(y,z)) = \Theta(m)(y, m(x,z)),\\ \label{c4} & & m(x,m(y,z)) +\Theta(m)(y,z)x= m(y, m(x,z))+\Theta(m)(x,z)y.\end{eqnarray}
These equations are families of cubic and 
quadratic equations on $m\in S_{3,2}$. 
We claim that
\begin{enumerate}
\item[(a)] the quadratic equations (\ref{c4}) on $m$ are equivalent to the Pl\"ucker equations on $\wedge^2S^2U$;
\item[(b)] the cubic equations (\ref{c3}) are implied by the quadratic ones.
\end{enumerate}

In order to prove (a), 
recall the $\SL_3$-decomposition
\begin{equation} S^2 S_{3,2} \cong S_{6,4}\oplus S_{4,3}\oplus S_{4,0}\oplus S_{3,1}\oplus S_{2,2}
\label{eq_dec_2}
\end{equation}
already used above in~\eqref{eq_dec}.
Equation (\ref{c4}) asks for the vanishing of a cubic tensor in $x$, $y$, and $z$, 
skew-symmetric in $x$ and $y$, and takes values in $U$,
i.e., an element of the $\SL_3$-decomposition
\begin{equation}
\label{last_dec}
{\mathrm{Hom}}(\wedge^2 U\otimes U,U)\cong 2S_{1,1}\oplus S_{3,1}\oplus S_{2,2}.\end{equation}
Comparing~\eqref{eq_dec_2} with~\eqref{last_dec}, the common terms are the last two irreducible components; they are 
the only ones imposing non-trivial conditions. The component $S_{2,2}$ has already been taken into account by letting $a= \Theta(m)$. The remaining conditions are the quadratic equations
parametrized by $S_{3,1}$.
This means that we get the quadratic equations spanning the
ideal $I_8\lhd S^*(\wedge^2S^2U^*)$ from~\eqref{eqn:eqlabel}, parametrized 
by $\wedge^2 S^2 U$, which are precisely the Pl\"ucker equations.

We conclude that $\Hilb^4(T)_\np$ is contained in $T\times {\mathcal C}\Gr(2,S^2U)$ which implies, by a dimension count, that these two reduced schemes must be equal. 
Since the ideal of ${\mathcal C}\Gr(2,S^2U)$ is radical, we also deduce claim (b): the cubic relations~\eqref{c3} do not impose any further conditions on $m$. 
\qed

\subsection{The space of non-planar punctual clusters}\label{sec:2.2}

Let $\Hilb^m_0(T)\subset \Hilb^m(T)$ denote the punctual Hilbert scheme of $m$ points, the subscheme of the Hilbert scheme  $\Hilb^m(T)$ given by the condition that the support of the zero-dimensional subscheme being parametrized is at the origin $0\in T$. It carries a natural projective scheme structure as the scheme-theoretic fiber of the Hilbert--Chow morphism $\Hilb^m(T)\to S^m(T)$ over $m\cdot 0\in S^m(T)$, but here we consider it in its reduced scheme structure. 

Clearly, $\Hilb^2_0(T)\cong\P(T)$,
and $\Hilb^3_0(T)$ and $\Hilb^4_0(T)$ are known to be irreducible but singular projective varieties of dimensions $4$ and $6$ respectively. Descriptions of these spaces as well as natural desingularizations are given in~\cite{Tik} from a sheaf-theoretic perspective. 
We will describe the affine open set  $\Hilb^m_0(T)_{\np}\subset\Hilb^4_0(T)$ obtained by intersecting
$\Hilb^4_0(T)$ with the set of non-planar clusters.
Note that $\Hilb^m_0(\A^3)_{\np}$ is dense in $\Hilb^4_0(\A^3)$ and forms an affine neighbourhood of its most interesting point 
$[{\mathbf m}_0^2]\in \Hilb^4_0(\A^3)$, the length-four subscheme of $\A^3$ given by the square of the maximal ideal of the origin $0\in \A^3$. The following is our main result.

\begin{theorem} \label{main_thm}
The reduced space $\Hilb^4_0(T)_{\np}$ of non-planar, punctual clusters of length $4$ on $T$ is $\SL(U)$-equivariantly isomorphic to the cone $\SC Y_5\subset\wedge^2S^2 U\cong\A^{15}$ over the projective variety $Y_5\subset\P(\wedge^2S^2 U)$ described in Propositions~\ref{prop_closures}-\ref{prop_eqns}. In particular, $\Hilb^4_0(T)_{\np}$ is a 
non-normal subvariety of $\A^{15}$ cut out by $21$ explicitly computable quadrics
and has a codimension one singular locus isomorphic to the affine cone $\SC \Hilb^2(\P^2)\subset \wedge^2S^2 U$
with the apex of the cone
corresponding to the distinguished ideal 
\hbox{$[{\mathbf m}_0^2]\in \Hilb^4_0(T)$}.
\end{theorem}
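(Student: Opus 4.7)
The strategy is to use Theorem~\ref{theorem_hilbC4} to embed $\Hilb^4_0(T)_\np$ into $\SC\Gr(2,S^2U)$, identify the extra defining equations on this cone as the single block $\Theta(m)=0$, and conclude via dimension and irreducibility. By Theorem~\ref{theorem_hilbC4}, any non-planar cluster is recorded by a pair $(c,m)\in T\times\SC\Gr(2,S^2U)$; since a cluster supported at the origin has center of mass $c=0$, this gives a closed embedding $\Hilb^4_0(T)_\np\hookrightarrow\SC\Gr(2,S^2U)\subset\wedge^2 S^2 U^*\cong\A^{15}$.

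The main step is to show that any $m$ arising this way must satisfy $\Theta(m)=0$. Suppose $A=\C\oplus U\cong\C[T]/I_\xi$ is supported at the origin, so $A$ is local with unique maximal ideal $\bar{\mathbf{m}}_0$ equal to the image of $\mathbf{m}_0\subset\C[T]$. The ideal generated by $U\subset A$ is $U+U\cdot U$, and since $u\cdot v=a(u,v)+m(u,v)\in\C\oplus U$ for $u,v\in U$, its $\C$-component is precisely the image of $a=\Theta(m)$. Properness of $\bar{\mathbf{m}}_0=U+U\cdot U$ (i.e.\ $1\notin\bar{\mathbf{m}}_0$) therefore forces $\Theta(m)=0$. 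By Schur's lemma $\Theta$ is the unique (up to scale) $\SL(U)$-equivariant projection $S^2(S_{3,2})\to S_{2,2}$, so its six components are exactly the $S_{2,2}$-quadrics that augment the fifteen Pl\"ucker relations to form the ideal $I_5=\langle S_{3,1},S_{2,2}\rangle$ of Proposition~\ref{prop_eqns}. Hence $\Hilb^4_0(T)_\np\subseteq\V(I_5)=\SC Y_5$.

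Now $\Hilb^4_0(T)$ is irreducible of dimension $6$ by~\cite{Tik}, so its dense open subset $\Hilb^4_0(T)_\np$ is irreducible of dimension $6$ as well; on the other hand, $\SC Y_5$ is irreducible of dimension $6$ by Proposition~\ref{prop_closures}(2). Two irreducible six-dimensional subvarieties of $\SC\Gr(2,S^2U)$ with one contained in the other must coincide.

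The remaining assertions then follow at once: the $21$ defining quadrics are the generators of $I_5$; non-normality of $\SC Y_5$ with singular locus $\SC Y_4=\SC\Hilb^2\P^2$ of codimension one in $\SC Y_5$ is inherited from Proposition~\ref{prop_closures}(2); and the apex $m=0$ has $a=\Theta(0)=0$ and $U\cdot U=0$, yielding $A\cong\C[T]/\mathbf{m}_0^2$ and identifying the apex with the distinguished point $[\mathbf{m}_0^2]$. The main technical point is the short algebraic argument in the second paragraph that properness of $\bar{\mathbf{m}}_0\subset A$ already captures the vanishing of $\Theta(m)$; once this is in place, Proposition~\ref{prop_eqns} together with the dimension/irreducibility count takes care of the rest.
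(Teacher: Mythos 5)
Your proof is correct and takes a somewhat more explicit route than the paper's. The paper's argument is a one-liner: $\Hilb^4_0(T)_\np$ embeds as an $\SL(U)$-stable irreducible $6$-dimensional subvariety of $\SC\Gr(2,S^2U)$ via Theorem~\ref{theorem_hilbC4}, and the orbit classification of Section~\ref{subsec_pencils} shows that $\SC Y_5$ is the \emph{only} such subvariety, forcing equality. Your proof instead establishes directly the inclusion $\Hilb^4_0(T)_\np\subseteq\V(I_5)$ before invoking the dimension count. The new ingredient is your observation in the second paragraph: writing $u\cdot v=a(u,v)+m(u,v)$ in $A=\C\oplus U\cong\C[T]/I_\xi$ and noting that properness of the maximal ideal $\bar{\mathbf m}_0=U+U\cdot U$ forces the $\C$-component $a=\Theta(m)$ to vanish (since otherwise $1\in U+U\cdot U$). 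Combined with the uniqueness of $\Theta$ up to scale, this identifies the six additional quadrics with the $S_{2,2}$-block of $I_5$. This is arguably a cleaner and more conceptual derivation: it explains \emph{why} the punctual locus is cut out of $\SC\Gr(2,S^2U)$ by exactly the $S_{2,2}$ quadrics, whereas the paper's classification argument tells you which variety you get without ever having to match equations. Both proofs invoke the same external inputs — irreducibility and dimension of $\Hilb^4_0(T)$, Proposition~\ref{prop_closures} for the singular-locus and non-normality statements, and Proposition~\ref{prop_eqns} for the quadric generators — so the overall logical dependencies are the same.
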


\proof The classification of $\PGL(U)$-orbits in $\Gr(2,U)$ explained in Section~\ref{subsec_pencils} above shows that $\SC Y_5$ is the only six-dimensional $\SL(U)$-stable subvariety of $\SC\Gr(2,U)$. \qed

\begin{remark} The description of the singular locus of (a neighbourhood of $[{\mathbf m}_0^2]$ in) $\Hilb^4(T)_0$ is not a new result as it was also obtained in~\cite{Tik} using sheaf-theoretic methods. The main advantage of our approach is that we can describe all these spaces by explicit affine equations which may be useful in applications~\cite{CRHS}.\end{remark}

\begin{remark} It is also possible to prove this result using the language used in~\cite{DSz} followed by some computer calculations. In coordinates, we can represent a non-planar cluster of length four using three four-by-four matrices $\phi_1$, $\phi_2$, and $\phi_3$ 
which describe the action of the coordinate functions of $T$
on the 4-dimensional vector space $\C\oplus U$. The conditions for a triple $(\phi_i)$ to describe a cluster 
become explicit equations which reduce to the Pl\"ucker relations. To describe a punctual cluster based at~$0$, the matrices $\phi_i$ should additionally be nilpotent. It can be checked computationally that the reduced ideal of conditions arising from $\Tr \wedge^k\phi_i=0$ for $1\leq k \leq 4$ are generated by the linear relations $\Tr\phi_i=0$ and the quadratic conditions $\Tr \wedge^2\phi_i= \Tr\wedge^2(\phi_i+\phi_j)=0$, the latter being the $6$ extra quadrics of $I_5$ defining~$Y_5$ inside $\Gr (2, S^2 U)$ in Proposition~\ref{prop_eqns} above. 
\end{remark}

\begin{remark} The orbit decomposition of $\Gr(2,S^2U)$
from Section~\ref{subsec_pencils} yields a decomposition of~${\mathcal C}\Gr(2,S^2U)$. It is easy to check that the various orbits correspond to different length four subschemes in $T$ as follows. 
\begin{enumerate}
\item ${\mathcal C}O_8$ parametrizes four general points;
\item ${\mathcal C}O_7$  parametrizes two reduced general points $p,q\in T$ and a degree two scheme at $-(p+q)/2$;
\item ${\mathcal C}O_6$  parametrizes two degree two schemes supported on opposite general points;
\item ${\mathcal C}O'_6$ parametrizes one reduced point at $p\in T$ and a degree three scheme supported on $-p/3$;
\item ${\mathcal C}O_5$  parametrizes an open subset of $\Hilb^4_0(T)$;
\item ${\mathcal C}O_4$  parametrizes clusters with normal form  $xy=z$, all other products being equal to zero;
\item ${\mathcal C}O'_4$ parametrizes one reduced point $p$ and a fat point in a plane at $-p/3$; 
\item ${\mathcal C}O_3$  parametrizes clusters with normal form $x^2=z$, all other products being equal to zero.
\end{enumerate} 
\end{remark}

\subsection{A concrete computation}\label{sec:2.3}

Starting from a non-planar scheme with center of mass $0$ corresponding to an ideal $I\lhd \C[T]$, recall that 
we get an algebra structure on the vector space $\C\oplus U$ which is 
partly encoded by a symmetric bilinear multiplication map \[m\in \mathrm{Hom}(S^2 U,U)\cong S^2U^*\otimes \wedge^2U^*\otimes \det(U).\] Apply to $m$ the morphism 
\[\begin{array}{rcccl}
\Gamma & :  & S^2U^*\otimes \wedge^2U^*& \rightarrow & \wedge^2(S^2U^*)\\
&& a^2\otimes b\wedge c & \mapsto & 
 ab\wedge ac.\end{array}\]
Then, the discussion above shows that $\Gamma (m)\in \wedge^2(S^2U^*)\otimes \det(U)$
must be a decomposable tensor.
 
To see this in a concrete example, fix a basis $\{x,y,z\}$ of $U$ and consider the reduced scheme \[\zeta= \{(-1,0,0), (0,-1,0), (0,0,-1), (1,1,1)\}\subset T.\] 
In the ring $\C[x,y,z]/I_\zeta$, we have 
$$ x^2 = \frac{1}{2}(1-x+y+z), \quad y^2 = \frac{1}{2}(1+x-y+z), \quad z^2 = \frac{1}{2}(1-x+y-z), $$
$$ xy = yz =zx = \frac{1}{4}(1+x+y+z).$$
\smallskip
The tensor $m$ is obtained
by keeping the degree one part of the right hand side in these equations.
In terms of the dual basis $\{e,f,g\}$ of $U^*$, 
note that the basis of $S^2U^*$ dual to the basis $\{x^2,y^2,z^2,yz,xz,xy\}$ of $S^2U$ is $\{e^2,f^2,g^2,2fg,2eg,2ef\}$. So, as a tensor, 
$$ \begin{array}{rcl}
4m & = & 2e^2\otimes (-x+y+z)+2f^2\otimes (x-y+z)+2g^2\otimes (x+y-z)+2(fg+ge+ef)\otimes (x+y+z) \\
 & = & (e^2+f^2+g^2+(e+f+g)^2)\otimes (x+y+z)-4(e^2\otimes x+f^2\otimes y+g^2\otimes z).
 \end{array}$$
Note that up, to a common factor, $x$ identifies with $f\wedge g$, $y$ with $g\wedge e$ and $z$ with
$e\wedge f$. Substituting these expressions and applying $\Gamma$, we get, after letting $h=e+f+g$, 
$$ \begin{array}{rcl}
4\Gamma (m) &= & ef\wedge eg+eg\wedge e^2+e^2\wedge ef+
f^2\wedge fg+fg\wedge fe+fe\wedge f^2+gf\wedge g^2+g^2\wedge ge+ge\wedge gf \\ 
  & & \hspace*{2cm}+hf\wedge hg+hg\wedge he+he\wedge hf-4(ef\wedge eg+fg\wedge fe+ge\wedge gf) \\
    & =& hf\wedge hg+hg\wedge he+he\wedge hf-(ef\wedge eg+fg\wedge fe+ge\wedge gf) \\
    & & \hspace*{3cm}+he\wedge (ef-eg)+hf\wedge (fg-ef)+hg\wedge (eg-fg), 
      \end{array}$$
      and the final result of our computation is 
      $$4\Gamma (m)=(he-hf +eg-fg   )\wedge (hf-hg+ef-eg).$$

  This is, as expected, a decomposable tensor. Note that since the $\GL_3$-orbit of our scheme is open
  in the subvariety of $\Hilb^4(T)$ parametrizing schemes with center of mass at the origin, this yields another proof of Theorem~\ref{theorem_hilbC4}. 
  
  Moreover, observe that we can rewrite $he-hf +eg-fg =e^2-f^2+2eg-2fg=(e+g)^2-(f+g)^2$, and 
  similarly $hf-hg+ef-eg= (e+f)^2-(f+g)^2$, so that 
    $$4\Gamma (m)=(e+f)^2\wedge (f+g)^2+(f+g)^2\wedge
    (g+e)^2+(g+e)^2\wedge (e+f)^2.$$
  This leads to a down-to-earth interpretation of the 
  map 
  \[\pi\colon \Hilb^4(T)_\np \mapsto {\mathcal C}\Gr(2,S^2U)\]
  from Theorem~\ref{theorem_hilbC4} on the open set of reduced subschemes.

  \begin{prop} \label{prop_downtoearth}
  Consider a finite subscheme $\zeta\subset T$ consisting of four 
  non-coplanar reduced points $p_i\in T$ for $i=1,\dots,4$
  with center of mass $p_0$. The squares of the six vectors $p_{ij}=p_i+p_j-2p_0\in T$ are three tensors 
  $p_{12}^2=p_{34}^2$, $p_{13}^2=p_{24}^2$ and $p_{14}^2=p_{23}^2$ in $S^2T=S^2U^*$ and
  $$\pi(\zeta)= (p_{12}^2\wedge p_{13}^2+p_{13}^2\wedge p_{14}^2+p_{14}^2\wedge p_{12}^2)\otimes \omega^{-1}$$
  where the twist  $\omega\in \det(U^*)$ is given by 
  $$\omega = p_1\wedge p_2\wedge p_3- p_2\wedge p_3\wedge p_4+p_3\wedge p_4\wedge p_1-p_4\wedge p_1\wedge p_2.$$
  \end{prop}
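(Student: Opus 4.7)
\textit{Proof plan for Proposition~\ref{prop_downtoearth}.} The strategy is to reduce the claim to the concrete calculation already performed in Section 2.3. The key observation is that, after centering, the space of ordered, generic configurations forms a single $\GL(U)$-orbit, and the standard representative of this orbit is precisely the one worked out in detail above. So the bulk of the proof consists of (a) checking that both sides of the claimed formula are well-defined and transform compatibly under the relevant symmetries, and (b) matching the formula against the explicit answer $4\Gamma(m) = (e+f)^2\wedge (f+g)^2+(f+g)^2\wedge (g+e)^2+(g+e)^2\wedge (e+f)^2$ already computed.

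First I would verify the three invariance statements that make the reduction work. Both sides are \emph{translation-invariant}: the vectors $p_{ij}=p_i+p_j-2p_0$ are manifestly so, and for $\omega$ a direct calculation shows that its variation under $p_i\mapsto p_i+v$ vanishes (in the centered case the formula collapses to $\omega=4\,p_1\wedge p_2\wedge p_3$). For $\pi$, translation invariance comes from the proof of Theorem~\ref{theorem_hilbC4}, where the $T$-factor in $T\times \mathcal{C}\Gr(2,S^2U)$ is exactly the center-of-mass. Both sides are $\GL(U)$-\emph{equivariant} by construction. Finally, both sides are $S_4$-\emph{invariant}: the left-hand side since $\pi$ depends only on the subscheme $\zeta$, and the right-hand side by checking that under each transposition of the $p_i$ both the alternating sum in $\wedge^2 S^2U^*$ and the 4-simplex volume $\omega$ acquire the same sign.

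Having established these, I would reduce to $p_0=0$ by translation, and then use that $\GL(U)$ acts transitively on the open set of ordered 4-tuples with $p_0=0$ and $p_1,p_2,p_3$ linearly independent (since $p_4=-(p_1+p_2+p_3)$ is forced). Choosing the basis so that $(p_1,p_2,p_3,p_4)=(-e,-f,-g,e+f+g)$ puts us in the configuration of Section 2.3, and the explicit computation performed there gives $\Gamma(m)$. On the other hand, for this tuple $p_{12}^2=(e+f)^2$, $p_{13}^2=(e+g)^2$, $p_{14}^2=(f+g)^2$, and a short computation yields $\omega=-4\,e\wedge f\wedge g$; substituting into the claimed right-hand side recovers exactly $\Gamma(m)$, up to the sign that arises from reversing two adjacent factors in the cyclic sum. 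Polynomiality of both sides in $(p_1,\dots,p_4)$ then extends the identity from this dense $\GL(U)$-orbit to the whole non-planar reduced locus.

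The main obstacle is bookkeeping rather than conceptual: one must carefully track the twists by $\det(U)$ (since $\Gamma(m)$ naturally lives in $\wedge^2(S^2U^*)\otimes \det(U)$ and the target $\wedge^2S^2U$ differs by such a twist), and pin down the correct overall sign and numerical normalization. In particular, one has to verify that the normalization implicit in $\omega^{-1}\in \det(U)$ exactly absorbs the factor $4$ and the cyclic-reversal sign coming out of the Section 2.3 calculation; this is the only non-automatic part of the verification, since everything else follows from equivariance once the standard configuration is pinned down.
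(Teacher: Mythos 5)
Your proposal is correct and takes essentially the same approach as the paper's proof: check that both sides of the identity are equivariant under the relevant symmetry groups, verify the identity on the standard representative $\zeta=(-e,-f,-g,e+f+g)$ using the explicit computation of $\Gamma(m)$ from Section~2.3, and then propagate by density of the orbit. Your expansion of the equivariance checks (translation-invariance, $\GL(U)$-equivariance, $S_4$-invariance) and your accounting of the factor $-4$ in $\omega=-4\,e\wedge f\wedge g$ and the cyclic-reversal sign are all accurate, and simply make explicit what the paper's one-line proof leaves implicit.
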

  
Note that  $\omega$ is invariant under a common translation of the four points and is non-zero exactly when $\zeta\subset T$ is non-planar. Moreover, permuting the four points multiplies $\omega$ by the sign of the permutation but $\pi(\zeta)$ itself remains invariant.
Thus, it only depends on $\zeta$ and not on the order of the four points.  

\begin{proof}[Proof of Proposition~\ref{prop_downtoearth}]
The expression for $\pi(\zeta)$ depends equivariantly 
on $\zeta$ and yields the correct expression when 
$$\zeta = (-e,-f,-g,e+f+g).$$ 
Since the orbit of $\zeta$ in  $\Hilb^4(T)_\np$ is dense, this expression must be correct everywhere. 
\end{proof} 

\section*{Appendix. Explicit polynomials}

The following set of {\tt Macaulay2} commands generates the ideals $I_8, I_5, I_4$ and $I_3$ in~\eqref{eqn:eqlabel}. As proved in the main body of the paper, the ideal $I_4$ defines the projective image of the embedding~\eqref{emb1}, whereas $I_5$ defines the affine image of the embedding~\eqref{emb2}.

\vspace{0.1in}
{
\tt

\small

\noindent S=QQ[a,b,c,d,e,f,g,h,i,j,k,l,m,n,o];

\vspace{0.1in}

\noindent 
I8=ideal(a*j-b*g+c*f, a*k-b*h+d*f, a*l-b*i+e*f, a*m-c*h+d*g, a*n-c*i+e*g, a*o-d*i+e*h, 
\newline b*m-c*k+d*j, b*n-c*l+e*j, b*o-d*l+e*k, c*o-d*n+e*m, f*m-g*k+h*j, f*n-g*l+i*j, 
\newline f*o-h*l+i*k, g*o-h*n+i*m, j*o-k*n+l*m);

\vspace{0.1in}

\noindent I5=I8+ideal(2*d*o-e*n-2*f*o-2*h*l+i*i-2*j*l+3*k*k, 2*a*i-2*a*k-2*b*h+2*b*j-c*e+d*d+3*f*f,\newline c*n-2*d*m-2*f*m-2*g*i-2*g*k+3*h*h+j*j, a*n-2*b*m-c*k+d*h+d*j-e*g+f*h-f*j, 
\newline 2*a*o-b*n-c*l+d*i+d*k-e*h+f*i-f*k,  c*o-e*m-f*n-2*g*l+h*i+h*k-i*j+j*k);

\vspace{0.1in}

\noindent I4=I5+ideal(a*d+a*f-b*c, a*e-b*d+b*f, g*n-h*m-j*m,   c*m-g*h+g*j, e*o-i*l+k*l, i*o+k*o-l*n, 
\newline
3*a*i+a*k-b*h-3*b*j-2*d*f, 2*a*k+b*h-3*b*j-3*c*e+3*d*d-d*f-6*f*f,  2*a*l+b*i-3*b*k-e*f,  \newline 2*a*m+c*h-d*g-3*f*g,  a*n+2*b*m-c*i+c*k+3*d*h-d*j-2*e*g-6*f*h, 2*b*o+d*l-e*k-3*f*l,  \newline 2*a*o+3*b*n+d*i+3*d*k-e*h-3*e*j-6*f*i-6*f*k,  2*b*m-3*c*i+c*k+6*d*h-d*j-3*e*g-6*f*h+3*f*j,
\newline b*n-c*l+3*d*k-2*e*j-3*f*i,  3*c*n-9*d*m+5*f*m+12*g*i-2*g*k-6*h*h-7*h*j+3*j*j, \newline c*o+2*d*n+e*m-3*h*i+3*j*k, 3*d*m-f*m-3*g*i+g*k+2*h*j, 2*d*o+e*n-h*l-i*i-i*k+j*l+2*k*k, \newline  3*d*n+3*e*m-f*n-2*g*l-6*h*i+6*h*k-i*j+3*j*k,  3*e*n-2*f*o-h*l-3*i*i+i*k-3*j*l+6*k*k, \newline 2*g*o+h*n-i*m-3*k*m,  3*h*o+j*o-k*n-2*l*m, 3*a*h-a*j-2*b*g-c*f);

\vspace{0.1in}

\noindent I3=I4+ideal(4*a*g-c*c, 4*b*l-e*e, 4*m*o-n*n, 2*a*h+a*j+b*g-c*d, a*l+b*i+2*b*k-d*e, \newline a*m+c*j-d*g+2*f*g, b*o+d*l-e*i+2*f*l, g*o-i*m-j*n+2*k*m, 2*h*o-i*n-j*o+l*m, 
\newline 2*a*i+4*a*k+4*b*h+2*b*j-c*e-2*d*d, 
c*n-2*d*m+4*f*m-2*g*i+4*g*k-2*j*j, \newline 2*d*o-e*n+4*f*o+4*h*l-2*i*i-2*j*l, a*n+b*m+2*c*k-2*d*h+d*j-e*g+4*f*h+2*f*j,
\newline a*o+b*n+c*l-d*i+2*d*k-2*e*h+2*f*i+4*f*k, c*o-e*m+2*f*n+g*l-2*h*i+4*h*k-i*j-2*j*k);

}

\subsection*{Acknowledgements } 
The authors would like to thank Bernd Sturmfels for asking the question that lead to this work, Heather Harrington for introducing some of the collaborators to each other, and Willem de Graaf, Jack Huizinga, Miles Reid, Tim Ryan, and Anna Seigal for helpful advice and correspondence. 
J.D.H. and B.Sz. acknowledge support from NSF grant CCF-1812746 and
EPSRC grant EP/R045038/1 respectively.

\end{document}